\documentclass[a4paper
]{amsart}

\usepackage{mathrsfs}
\usepackage{todonotes}
\usepackage{enumitem}
\usepackage{array}
\usepackage{amssymb}
\usepackage{mathtools}
\usepackage{faktor}
\usepackage{xcolor}
\usepackage{tikz}
\usetikzlibrary{arrows}
\usetikzlibrary{decorations.pathmorphing}
\tikzset{snake it/.style={decorate, decoration=snake}}
\usepackage{tikz-cd}
\usepackage{amsthm}
\usepackage{thmtools}
\usepackage[breakable]{tcolorbox}
\usepackage[utf8]{inputenc}
\usepackage{csquotes}
\usepackage[english]{babel}
\usepackage{slashed}

\usepackage{newunicodechar}
\usepackage[style=alphabetic,backend=biber,maxnames=5,maxalphanames=5,doi=true,isbn=false,url=false]{biblatex}

\usepackage[hypertexnames=false]{hyperref} 
\hypersetup{urlcolor=blue, citecolor=blue, linkcolor=blue, colorlinks=true}
\usepackage[capitalise,nameinlink]{cleveref}
\crefname{enumi}{Theorem}{Main theorems}
\Crefname{enumi}{Theorem}{Main theorems}
\bibliography{literature.bib}
  
\renewbibmacro{in:}{}
\renewbibmacro*{issue+date}{%
  \ifboolexpr{not test {\iffieldundef{year}} or not test {\iffieldundef{issue}}}
    {\printtext[parens]{%
       \iffieldundef{issue}
         {\usebibmacro{date}}
         {\printfield{issue}%
          \setunit*{\addspace}%
          \usebibmacro{date}}}}
    {}%
  \newunit}

\newtheorem{theorem}{Theorem}

\newtheorem{lemma}[theorem]{Lemma}

\theoremstyle{definition}

\numberwithin{equation}{section}

\definecolor{grau1}{RGB}{100, 100, 100}
\definecolor{grau2}{RGB}{150, 150, 150}
\definecolor{grau3}{RGB}{200, 200, 200}

\newcommand{\oldcomment}[1]{
\begin{tcolorbox}[breakable]
	Old version in comments
\end{tcolorbox}
}

\newcommand{\proofcomment}[1]{
\begin{tcolorbox}[breakable]
	Proof in comments
\end{tcolorbox}
}

\DeclareMathAlphabet{\mathpzc}{OT1}{pzc}{m}{it}

\let\oldtocsection=\tocsection
\let\oldtocsubsection=\tocsubsection
\let\oldtocsubsubsection=\tocsubsubsection
\renewcommand{\tocsection}[2]{\hspace{0em}\oldtocsection{#1}{#2}}
\renewcommand{\tocsubsection}[2]{\hspace{1em}\oldtocsubsection{#1}{#2}}
\renewcommand{\tocsubsubsection}[2]{\hspace{2em}\oldtocsubsubsection{#1}{#2}}

\newcommand{\nocontentsline}[3]{}
\newcommand{\tocless}[2]{\bgroup\let\addcontentsline=\nocontentsline#1{#2}\egroup}

\setenumerate[1]{leftmargin=*,labelindent=0pt,label=(\roman*),}

\renewcommand{\phi}{\varphi}

\newcommand{\bbR}{\mathbb{R}}

\begin{document}

\author[Georg Frenck]{Georg Frenck}
\email{\href{mailto:georg.frenck@math.uni-augsburg.de}{georg.frenck@math.uni-augsburg.de}}
\urladdr{\href{http://frenck.net/Math}{frenck.net/math}\vspace{-.8em}}
\address{Universität Augsburg, Universitätsstr.~14, 86159 Augsburg, Germany}

\author[Bernhard Hanke]{Bernhard Hanke}
\email{\href{mailto:hanke@math.uni-augsburg.de}{hanke@math.uni-augsburg.de}}
\urladdr{\href{https://www.uni-augsburg.de/de/fakultaet/mntf/math/prof/diff/team/bernhard-hanke/}{math.uni-augsburg.de/hanke}\vspace{-.8em}}
\address{Universität Augsburg, Universitätsstr.~14, 86159 Augsburg, Germany}

\author[Sven Hirsch]{Sven Hirsch}
\email{\href{mailto:}{sven.hirsch@columbia.edu}}
\urladdr{\href{https://svenhirsch.com/}{svenhirsch.com}\vspace{-.8em}}
\address{Columbia University, 2990 Broadway, New York NY 10027, USA}

\title[The lock principle for scalar curvature]{The lock principle for scalar curvature}

\begin{abstract}
    We prove a Riemannian positive mass theorem for asymptotically flat spin manifolds with hypersurface singularities.
    Unlike earlier results, some components of the singular set may be mean-concave, provided that other components of the singular set are sufficiently mean-convex. 
    Our proof uses initial data sets where a suitably chosen second fundamental form transfers convexity defects between different singularity components. 
\end{abstract}

\maketitle

\section{Introduction}

Extremality and rigidity results in scalar curvature geometry on smooth Riemannian manifolds often remain valid for Riemannian metrics with singularities, provided that the relevant curvature restrictions are still satisfied in a weak sense. 
To give a well-known example, the positive mass theorem for asymptotically flat manifolds holds for manifolds with metric singularities along hypersurfaces, if the singularities are {\em mean-convex}, that is, the mean curvature jump with respect to the infinity pointing normal vector is non-positive, see \cite{shi-tam-manifolds-with-boundary, Miao2002cornersATP}.
As described in \cite[Proposition 3.1]{Miao2002cornersATP}, {\em mean-concave} singularities (with a strictly negative mean curvature jump) can be regarded as having infinitely negative scalar curvature in a weak sense.

\medskip

This paper shows that the above mean-convexity condition can be replaced with a weaker one: The effect of mean-concave jumps at some components of the singular set can be offset by mean-convex jumps at others. 
Loosely speaking, the {\em local} mean-convexity condition can be replaced by a weaker {\em global} one by passing to some average.
This phenomenon is unexpected, given that the pointwise lower scalar curvature bound in the positive mass theorem cannot be replaced by a lower bound on an averaged scalar curvature.

\medskip

To state our main result, let $M$ be a non-compact connected spin manifold which is decomposed along smooth compact hypersurfaces $\Sigma_1,\dots\Sigma_N$ as 
\[
M = M_0 \cup_{\Sigma_1} \dots\cup_{\Sigma_N} M_N.
\]
Here $M_0, \dots, M_N \subset M$ are smooth submanifolds with boundary, $M_{i}$ is compact for $1 \leq i \leq N-1$, and $M_N$ is noncompact and connected; see \cref{fig:corners}.
For $0 \leq i \leq N$, let $g_i$ be smooth Riemannian metrics on $M_i$ such that 
\begin{itemize}
    \item for $1 \leq i \leq N$, we have $(g_{i-1})|_{\Sigma_i} = (g_i)|_{\Sigma_i}$,
    \item each $g_i$ has non-negative scalar curvature, 
    \item $(M_{N},g_N)$ is asymptotically flat.
\end{itemize}
We write $g\coloneqq g_0\cup\dots\cup g_N$ for the induced $C^0$-Riemannian metric on $M$.
Let us denote the mean curvatures along $\Sigma_i$ by
\[H_{i,-}\coloneqq H(\Sigma_i\subseteq M_{i-1}), \qquad H_{i,+}\coloneqq H(\Sigma_i\subseteq M_{i}), \qquad 1 \leq i \leq N.
\]
We adopt the convention common in general relativity that mean curvatures are computed with respect to the infinity-pointing normal.
That is, if $M$ is the Euclidean space $\mathbb{R}^{n}$ with the decomposition
\[
    M_0 := \{ |x| \leq 1\}, \quad M_1:=\{1 \leq |x| \leq 2\}, \quad M_2 := \{|x| \geq 2\}, 
\]
then we have $H_{1,\mp} = n-1,\ H_{2,\mp } = (n-1)/2$.

\medskip

For $1 \leq i \leq N$, we set 
\begin{align*}
    \underline H_{i,-} := \min_{\Sigma_i} H_{i, -}, \qquad \overline H_{i,+} := \max_{\Sigma_i} H_{i, +}.
\end{align*}
Our main result is the following.

\begin{theorem}\label{thm main}
Assume that for $1 \leq i \leq N$, the mean curvatures $H_{i, \mp}$ are strictly positive, and that there exists some $1 \leq \Lambda < N$ such that 
\begin{align*}
\underline H_{i,-} \leq \overline H_{i,+} & \text{for } & 1 \leq i \leq \Lambda , \\
\underline H_{i,-} \geq \overline H_{i,+} & \text{for } & \Lambda + 1\leq i \leq N.
\end{align*}
Furthermore, assume that
\begin{align}\label{eq:condition-on-squares-of-mean-curvatures}
    \sum_{1 \leq i \leq N} \left(\underline H_{i,-}^2 - \overline H_{i,+}^2\right) \ge 0.
\end{align}
Then the ADM mass of $( M,g)$ is nonnegative.
\end{theorem}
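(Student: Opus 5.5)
The plan is to deduce \cref{thm main} from the spacetime (initial data) positive mass theorem for spin manifolds, in its Witten form. We equip $(M,g)$ with a piecewise smooth symmetric $2$-tensor $k$ that satisfies the dominant energy condition on each $M_i$. We then ask only for a \emph{spacetime} jump condition along the $\Sigma_i$, in which the mean curvature jump may be compensated by a jump of $\operatorname{tr}_{\Sigma_i} k$.

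Concretely, I would take $k$ pure trace on each piece, $k|_{M_i}=\tau_i g_i$ with constants $\tau_i$, and $\tau_N=0$ so that $(M_N,g_N,k)$ is asymptotically flat initial data with the same ADM energy as $(M_N,g_N)$. Then
\[
J=\operatorname{div}k-d\operatorname{tr}k=0,\qquad \mu=\tfrac12\bigl(R+n(n-1)\tau_i^2\bigr)\ge0,
\]
so the dominant energy condition holds on each $M_i$. The $\tau_i$ are the free parameters that move convexity from the mean-convex components $\Sigma_i$, $i>\Lambda$, to the mean-concave components $\Sigma_i$, $i\le\Lambda$. The value $\tau_0$ can be chosen freely because $M_0$ is compact and nothing is imposed at a boundary there.

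First step: derive the corner condition. Run the Witten argument with the Dirac--Witten operator $\mathcal D=D+\tfrac12\operatorname{tr}(k)\,e_0\cdot$ on each piece. At $\Sigma_i$ one obtains a boundary term of the form
\[
\tfrac12\bigl\langle (H-\operatorname{tr}_{\Sigma}k\; e_0\cdot\nu\cdot)\psi,\psi\bigr\rangle .
\]
The endomorphism $e_0\cdot\nu\cdot$ is self-adjoint with eigenvalues $\pm1$, so the boundary term splits into the null expansions $\theta^\pm=H\pm(n-1)\tau$. For a suitable choice of spinor boundary matching, the condition is positivity of a quadratic expression in $(H,\tau)$ on both sides of the interface. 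I expect it to take the form
\[
H_{i,-}^2-(n-1)^2\tau_{i-1}^2\;\ge\; H_{i,+}^2-(n-1)^2\tau_i^2 ,
\]
that is, a comparison of the Lorentzian norms $\theta^+\theta^-$ of the mean curvature vectors. It is this comparison that the squares in \eqref{eq:condition-on-squares-of-mean-curvatures} reflect.

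Second step: choose the $\tau_i$ by telescoping. Set $a_i:=\underline H_{i,-}^2-\overline H_{i,+}^2$ and impose $(n-1)^2(\tau_{i-1}^2-\tau_i^2)\le a_i$, with $\tau_N=0$. Going down from $i=N$, on the convex components ($a_i\ge0$) we may increase $\tau^2$ by up to $a_i$. On the concave components we must decrease it by $|a_i|$. Positivity of the $H_{i,\mp}$ ensures that the quantities $\theta^\pm$ stay positive along the way. The ordering hypothesis (the concave components are $\Sigma_1,\dots,\Sigma_\Lambda$, closer to $M_0$ than the convex ones) means that all increases happen before all decreases. Hence $\tau^2$ never needs to become negative, and the total budget is exactly \eqref{eq:condition-on-squares-of-mean-curvatures}. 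This explains the role of $\Lambda$ in the hypothesis of \cref{thm main}.

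Third step: remove the corners. The natural route is a Miao-type mollification of $g$ together with a matching smoothing of $k$ in thin collars, producing smooth initial data whose dominant energy deficit is supported in the collars. One then conformally/Corvino-corrects, or solves a Jang-type/Witten equation with small error, keeping the mass change small. Alternatively, one proves directly a distributional Witten identity across the corners.

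The main obstacle is the first step: pinning down the exact corner inequality for $(g,k)$ with $k$ jumping, and showing that the spinor boundary terms really combine into the squared expressions rather than the linear $\theta^\pm$ separately. The first thing to try is to work with the two chiral projections $\tfrac12(1\pm e_0\nu)\psi$. Combining their contributions with weights, via Cauchy--Schwarz, should produce the $\theta^+\theta^-$ form. If the constant-$\tau$ ansatz fails, I would allow $\tau$ to vary radially in collars. This is permitted as long as $(n-1)|\tau'|\le\tfrac12 n(n-1)\tau^2+\tfrac12R$, a Riccati-type constraint that can be used to transport $\tau$ across the collars.
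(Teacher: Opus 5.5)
Your strategy is the paper's. You put $k=\tau_\ell g_\ell$ with constants $\tau_\ell$ on $M_\ell$ and $\tau_N=0$; the paper takes $\tau_\ell=c_\ell/(n-1)$. Then $J=0\le\mu$ on every piece. You trade squared mean curvatures against $(\operatorname{tr}_{\Sigma}k)^2$ by a telescoping choice whose total budget is $\sum_i(\underline H_{i,-}^2-\overline H_{i,+}^2)\ge0$. Your downward recursion is fine; the paper instead builds upward from $c_0=0$ with $c_\ell^2=\max\{\sum_{i\le\ell}(\overline H_{i,+}^2-\underline H_{i,-}^2),0\}$.

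The gap is that the two steps carrying the analytic content are not supplied. Step one only guesses the crease condition. Step three (mollification plus Corvino or Jang corrections) outlines a positive mass theorem for initial data sets in which both $g$ and $k$ jump across hypersurfaces, which is a substantial result in its own right. The paper does not prove such a theorem; it invokes Theorem 1.3 of Kazaras--Khuri--Lin. Since the $\beta$-terms vanish for pure-trace $k$, their jump condition (1.7) reads as follows. There must be a \emph{single} angle $\vartheta_\ell\in\mathbb{R}$ such that $X=F_{\vartheta_\ell}\vec H_{\ell,-}-\vec H_{\ell,+}$ satisfies $X_1\ge|X_2|$ on all of $\Sigma_\ell$, where $\vec H=(H,\operatorname{tr}_{\Sigma}k)$.

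Your guessed inequality captures this only partly. Write $\theta^\pm=X_1\pm X_2$; the boost acts by $\theta^\pm\mapsto e^{\mp\vartheta}\theta^\pm$. So at one point where the expansions on both sides (indicated by subscripts) are positive, a suitable $\vartheta$ exists if and only if $\theta^+_-\theta^-_-\ge\theta^+_+\theta^-_+$. That is your comparison of Lorentzian norms, so your heuristic is right there. Two things are missing.
\begin{enumerate}
\item The angle may not depend on the point, so a pointwise comparison is not enough.
\item Your claim that $H_{i,\mp}>0$ keeps $\theta^\pm=H\pm(n-1)\tau$ positive is false. The first concave corner already forces $(n-1)^2\tau_1^2\ge\overline H_{1,+}^2-\underline H_{1,-}^2$, which may exceed $\underline H_{2,-}^2$. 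Mean curvature vectors can therefore become null or timelike, and there your product-of-expansions reduction is unjustified.
\end{enumerate}

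The paper's Lemma settles both points. Put $a=(n-1)\tau_{i-1}$ and $\xi=(n-1)\tau_i$, and arrange equality $\underline H_{i,-}^2-a^2=\overline H_{i,+}^2-\xi^2$. In your recursion, take the maximal step each time; the paper instead enlarges $\overline H_+$ at corners where $c_\ell=0$. Then $(\underline H_{i,-},a)$ and $(\overline H_{i,+},\xi)$ lie on the same boost orbit, also in the timelike and null cases, since $a,\xi\ge0$ and $\underline H_{i,-},\overline H_{i,+}>0$. Choose $\vartheta$ with $F_\vartheta(\underline H_{i,-},a)=(\overline H_{i,+},\xi)$. Then $X=F_\vartheta(H_{i,-}-\underline H_{i,-},0)+(\overline H_{i,+}-H_{i,+},0)$, which lies in the convex, boost-invariant cone $\{X_1\ge|X_2|\}$ at every point of $\Sigma_i$.

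With this lemma and the crease theorem in place of your steps one and three, the argument closes.
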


\begin{figure}[ht]
    \begin{tikzpicture}
        \node at (0,0) {\includegraphics[width=.8\textwidth]{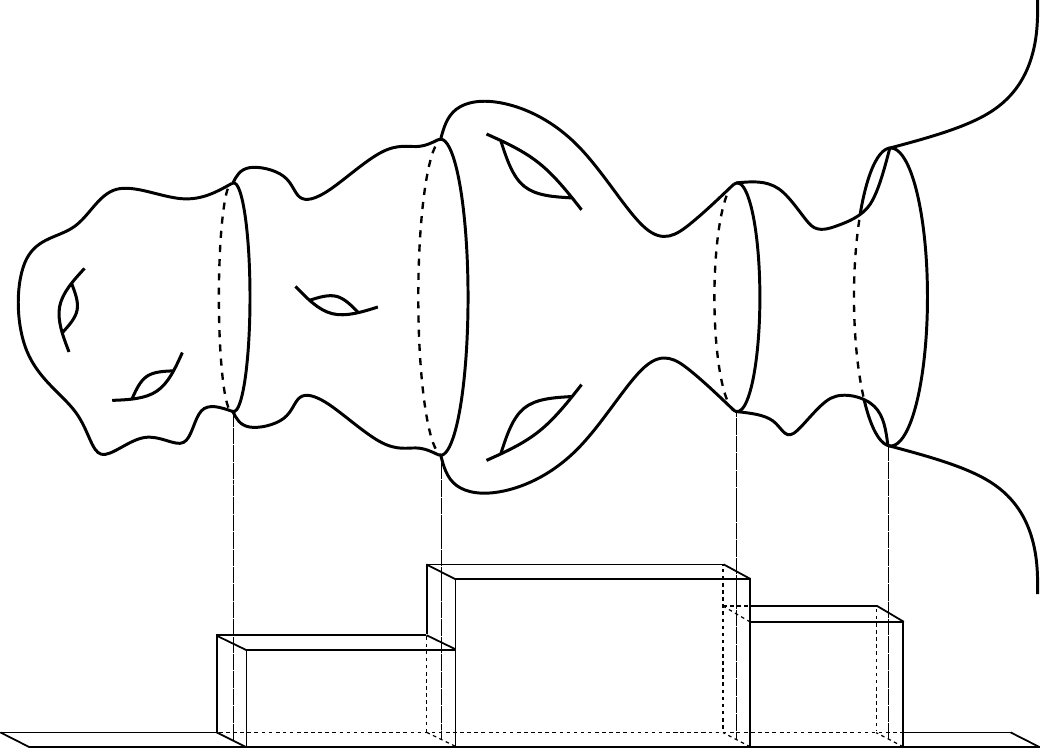}};
        \node[anchor=west] at (-6,3) {$ M = M_0\cup\dots\cup M_4$:};
        \node[scale=0.8] at (-4,2.1) {$M_0$};
        \node[scale=0.8] at (-2,2.3) {$M_1$};
        \node[scale=0.8] at (1,2.5) {$M_2$};
        \node[scale=0.8] at (2.9,1.9) {$M_3$};
        \node[scale=0.8] at (4.5,3) {$M_4$};
        \node[scale=0.8] at (-2.3,0.1) {$\Sigma_1$};
        \node[scale=0.8] at (-0.2,0.1) {$\Sigma_2$};
        \node[scale=0.8] at (2.6,0.1) {$\Sigma_3$};
        \node[scale=0.8] at (4.3,0.1) {$\Sigma_4$};
        
        \draw[stealth-stealth](-2.16,-3.63) to (-2.16,-2.7);
        \draw[stealth-stealth](-.13,-3.63) to (-.13,-2);
        \draw[stealth-stealth](2.74,-3.63) to (2.74,-2.4);

        \node[anchor=west] at (-6,-2) {$k=\frac{c_i}{n-1}\cdot g$:};
        \node[scale=0.8] at (-4,-3.2) {$c_0=0$};
        \node[scale=0.8] at (-1.81,-3.2) {$\frac{c_1}{n-1}$};
        \node[scale=0.8] at (0.22,-3.2) {$\frac{c_2}{n-1}$};
        \node[scale=0.8]at (3.09,-3.2) {$\frac{c_3}{n-1}$};
        \node[scale=0.8] at (4.5,-3.2) {$c_4=0$};
    \end{tikzpicture}
    \caption{The lock principle for $N=4$ and $\Lambda=2$.
    Two mean-concave curvature jumps at $\Sigma_1$ and $\Sigma_2$ are offset by two mean-convex curvature jumps at $\Sigma_3$ and $\Sigma_4$.
    The constants $c_i$ determining the symmetric tensor $k$ are defined in \eqref{eq:def-of-c-and-d}.}\label{fig:corners}
\end{figure}

For the proof we view $( M,g)$ as a time-symmetric slice of an initial data set $(M^n,g,k)$ with symmetric $(0,2)$-tensor field $k$.
When chosen appropriately, the tensor $k$ acts as a \emph{lock}: it allows one to convert an unfavorable mean curvature jump at $\Sigma_i$ into a corresponding jump of $k$, to transport this defect across $ M_i$, and finally to absorb it at a later singularity $\Sigma_{j}$ with a favorable mean curvature jump.
A typical situation is shown in \cref{fig:corners}.
In this situation, we can then apply the positive mass theorem for initial data sets with creases proven by Kazaras--Khuri--Lin \cite{KazarasKhuriLin2025}.
By moving to initial data sets, we thus prove a result in Riemannian geometry that cannot be achieved by local deformation arguments.

\medskip

This paper elaborates on a particular example of the lock principle. 
In \cref{sec extensions}, we discuss possible extensions.

\medskip

\noindent \textbf{Acknowledgements:} BH and SH are grateful to the Lonavala Geometry Festival for the ideal working conditions where part of this work was carried out. 
BH was partially supported by the DFG-SPP 2026 ``Geometry at Infinity''.

\section{Proof\texorpdfstring{ \cref{thm main}}{ Theorem \ref*{thm main}}}

Our main result follows from iterative application of the following lemma.

\begin{lemma}\label{lemma main}
    Suppose that $\Sigma$ is a smooth $(n-1)$-manifold and that $H_-,H_+\colon \Sigma\to\bbR$ are smooth functions, and let $\underline H_{-},\overline H_+\in(0,\infty)$ be such that $H_-\ge \underline H_-$ and $H_+\le\overline H_+$.
    Given $a\in\bbR$ satisfying $\overline{H}_+^2-\underline{H}_-^2+a^2\ge0$, we define functions $\vec{H}_+, \vec{H}_- \colon \Sigma \to \mathbb{R}^{1,1}$ via
    \begin{align*}
       \vec H_-=\begin{pmatrix}
            H_-\\
            a
        \end{pmatrix},
        \qquad
        \vec H_+=\begin{pmatrix}
            H_+\\
            \sqrt{\overline H_+^2-\underline H_-^2+a^2}
        \end{pmatrix}.
    \end{align*}
    Then there exists a hyperbolic rotation angle $\vartheta\in \mathbb R$ such that the function
    \begin{align*}
        X=F_\vartheta \vec H_--\vec H_+ \colon \Sigma \to \mathbb R^{1,1}, \qquad F_\vartheta=
        \begin{pmatrix}
            \cosh \vartheta & -\sinh \vartheta\\
            -\sinh \vartheta &\cosh\vartheta
        \end{pmatrix},
    \end{align*}
  satisfies
    \begin{align*}
        X_1\ge|X_2| \quad \textrm{ along } \Sigma,
    \end{align*}
    where $X_1$ and $X_2$ denotes the first and second component of $X$, respectively.
\end{lemma}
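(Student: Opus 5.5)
The plan is to diagonalize the hyperbolic rotation using light-cone coordinates. For $Y=(Y_1,Y_2)\in\mathbb R^{1,1}$ put $u(Y)=Y_1+Y_2$ and $v(Y)=Y_1-Y_2$. A direct computation gives
\begin{align*}
u(F_\vartheta Y)=e^{-\vartheta}u(Y), \qquad v(F_\vartheta Y)=e^{\vartheta}v(Y).
\end{align*}
Moreover, $X_1\ge|X_2|$ is equivalent to $u(X)\ge0$ and $v(X)\ge0$. Writing $b:=\sqrt{\overline H_+^2-\underline H_-^2+a^2}\ge0$, the claim therefore reduces to finding $\vartheta$ such that, pointwise on $\Sigma$,
\begin{align*}
e^{-\vartheta}(H_-+a)\ge H_++b, \qquad e^{\vartheta}(H_--a)\ge H_+-b .
\end{align*}

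The first step is to replace the functions by their bounds. Since $e^{\pm\vartheta}>0$, $H_-\ge\underline H_-$ and $H_+\le\overline H_+$, it suffices to prove the two constant inequalities
\begin{align*}
e^{-\vartheta}(\underline H_-+a)\ge \overline H_++b, \qquad e^{\vartheta}(\underline H_--a)\ge \overline H_+-b .
\end{align*}
The key algebraic observation is that the definition of $b$ is exactly what makes the products of the two sides agree:
\begin{align*}
(\underline H_-+a)(\underline H_--a)=\underline H_-^2-a^2=\overline H_+^2-b^2=(\overline H_++b)(\overline H_+-b)=:P .
\end{align*}

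The second step is to choose $\vartheta$. Because $\overline H_+>0$ and $b\ge0$, we have $\overline H_++b>0$. Provided $\underline H_-+a>0$, set
\begin{align*}
e^{\vartheta}:=\frac{\underline H_-+a}{\overline H_++b}.
\end{align*}
With this choice the first inequality holds with equality. For the second,
\begin{align*}
e^{\vartheta}(\underline H_--a)=\frac{P}{\overline H_++b}=\overline H_+-b,
\end{align*}
so it also holds with equality. This computation does not depend on the sign of $P$, so no case distinction between $\underline H_-\ge|a|$ and $\underline H_-<|a|$ is needed. Returning to the functions, we get $u(X)\ge0$ and $v(X)\ge0$ everywhere, which is the claim.

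The main obstacle is the positivity requirement $\underline H_-+a>0$, which is needed for $\vartheta$ to exist. When $a\ge0$ it holds automatically, since $\underline H_->0$. When $a\le-\underline H_-$, however, the first inequality has a nonpositive left-hand side and a positive right-hand side whenever $H_-=\underline H_-$ and $H_+=\overline H_+$ somewhere, so it cannot hold. I would therefore state the lemma explicitly with the hypothesis $a>-\underline H_-$, for instance $a\ge0$, and check that the constants produced in the iterative application (the $c_i$ of the figure) satisfy it. I expect them to be nonnegative, since the iteration starts from $c_0=0$ and $b\ge0$ is passed on as the next value of $a$.
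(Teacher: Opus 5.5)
Your argument is correct, and it produces the same angle as the paper. From $e^{\vartheta}=(\underline H_-+a)/(\overline H_++b)=(\overline H_+-b)/(\underline H_--a)$ one gets $\sinh\vartheta=(a\overline H_+-b\underline H_-)/(\underline H_-^2-a^2)$, and for $a=\underline H_-$ simply $\vartheta=\log\underline H_--\log\overline H_+$.

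What differs is the verification. The paper substitutes $\cosh\vartheta=\sqrt{1+\sinh^2\vartheta}$ and splits into the three cases $\underline H_->a$, $\underline H_-<a$, $\underline H_-=a$. In each case, after multiplying by $\underline H_-^2-a^2$, it bounds $X_1$ from below by, and writes $X_2$ as, $(H_--\underline H_-)$ times constants, whose comparison needs a separate squared identity.

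Your null coordinates diagonalize $F_\vartheta$. The identity $\underline H_-^2-a^2=\overline H_+^2-b^2$ then says that $F_\vartheta$ maps $(\underline H_-,a)$ exactly onto $(\overline H_+,b)$. Hence $X=F_\vartheta(H_--\underline H_-,0)+(\overline H_+-H_+,0)$ is a sum of two vectors in the closed cone $\{X_1\ge|X_2|\}$. No case distinction is needed, and the choice of $\vartheta$ is explained as the boost carrying one extremal vector to the other. The paper's route gives an explicit $\sinh^{-1}$ formula but obscures the constraint $\underline H_-+a>0$.

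Your caveat is correct and points to a genuine defect in the statement as written. Take $H_-\equiv\underline H_-=1$, $H_+\equiv\overline H_+=1$ and $a=-1$, which satisfy all stated hypotheses. Then $b=1$ and $X=(e^{\vartheta}-1,\,-e^{\vartheta}-1)$, so $X_1-|X_2|=-2$ for every $\vartheta$.

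The paper's proof misses this because, in the case $\underline H_->a$, it asserts $\xi<\overline H_+$ (your $b$) and $\underline H_-^2-a^2>0$. Both of these require $|a|<\underline H_-$, and the formula for $\vartheta$ divides by zero at $a=-\underline H_-$.

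The extra hypothesis you propose, $a>-\underline H_-$ or simply $a\ge0$, is harmless. In the proof of the main theorem the lemma is only applied with $a=c_{\ell-1}=\sqrt{\max\{d_{\ell-1},0\}}\ge0$.
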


\begin{proof}
Let us abbreviate $\xi\coloneqq\sqrt{\overline H_+^{2}-\underline H_-^{2}+a^2}$.
We choose 
\begin{align}\label{eq vartheta 1}
    \vartheta =\sinh^{-1}\left( 
    \frac{a\overline H_+ - \xi\underline H_-}
    {\underline H_-^{2}-a^2}
    \right),
\end{align}
in case $\underline H_-\ne a$, and
\begin{align}\label{eq vartheta 2}
    \vartheta=\log(\underline H_-)-\log(\overline H_+)
\end{align}
in case $\underline H_-=a$.
Using $\cosh(\vartheta) = \sqrt{1+\sinh^2(\vartheta)}$, we obtain
\begin{align*}
   & F_\vartheta \vec H_- - \vec H_+
    =
    \begin{pmatrix}
        \sqrt{1+\sinh^2(\vartheta)}\,H_- - \sinh(\vartheta)\, a - H_+\\[0.4em]
        \sqrt{1+\sinh^2(\vartheta)}\,a - \sinh(\vartheta)\, H_- 
        - \xi
    \end{pmatrix}.
\end{align*}
If $\underline H_-\neq a$, then
\begin{align*}
    &\left(\underline H_-^{2}-a^2\right)^2\left(1+\sinh^2(\vartheta)\right)\\
    &\qquad\qquad = \underline H_-^4 - 2a^2\underline H_-^2 + a^4 + (a\overline H_+ - \xi\underline H_-)^2\\
    &\qquad\qquad = \underline H_-^4 - 2a^2\underline H_-^2 + a^4 + a^2\overline H_+^2 - 2a\xi\overline H_+\underline H_- + (\overline H_+^2-\underline{H}_-^2+a^2)\underline H_-^2\\
    &\qquad\qquad = \left(\overline H_+ \underline H_- - a\xi\right)^2
\end{align*}
and hence $\left|\underline H_-^{2}-a^2\right|\sqrt{1+\sinh^2(\vartheta)}=\left|\overline H_+\underline H_- - a \xi\right|$.

\medskip

In the case that $\underline H_->a$, we note that $\xi< \overline H_+$, and that both $\underline H_-^{2} -a^2$ and  $\overline H_+ \underline{H}_--a\xi$ are positive. 
We can thus estimate
\begin{align*}
    &(\underline H_-^{2} - a^2) 
    \left(\sqrt{1+\sinh^2(\vartheta)}\,H_- - \sinh(\vartheta)\, a - H_+\right)\\
        &\qquad\qquad= H_- \left(\overline H_+ \underline H_- - a \xi \right) - a \left(
        a\overline H_+ - \xi\underline H_-\right)- H_+(\underline H_-^{2}-a^2)\\
        &\qquad\qquad\ge(H_- -\underline H_-)\left(\overline H_+ \underline H_- - a \xi\right),
\end{align*}
where we used $H_+(\underline H_-^{2}-a^2)\le \overline H_+(\underline H_-^{2}-a^2)$ for the final inequality.
Moreover, we have
\begin{align*}
    &(\underline H_-^{2}-a^2)\left(\sqrt{1+\sinh^2(\vartheta)}\,a - \sinh(\vartheta)\, H_- - \xi\right)\\
        &\qquad\qquad= a \left(\overline H_+ \underline H_- - a \xi\right) - H_- \left(a\overline H_+ - \xi\underline H_-\right)- \xi(\underline H_-^{2}-a^2)\\
        &\qquad\qquad=(H_--\underline H_-)\left(\xi\underline H_--a\overline H_+\right)
\end{align*}
Since
\begin{align}\label{computation1}
\begin{split}
    &\left(\overline H_+ \underline H_- - a \xi \right)^2 -\left(\xi\underline H_- -a\overline H_+\right)^2\\
        &\qquad=\overline H_+^2\underline H_-^2- 2a\xi\overline H_+\underline H_- + a^2\xi^2 - \underline H_-^2\xi^2 + 2a\xi\overline H_+\underline H_- - a^2\overline H_+^2\\
        &\qquad= \overline H_+^2\underline H_-^2 + a^2 \left(\overline H_+^{2}-\underline H_-^{2}+a^2\right) - \underline H_-^2\left(\overline H_+^{2}-\underline H_-^{2}+a^2\right)- a^2 \overline H_+^2\\
        &\qquad =a^4-2a^2\underline H_-^2+\underline H_-^4 \ge0,
    \end{split}
\end{align}
we obtain $\overline H_+ \underline H_- - a\xi\ge |\xi\underline H_--a\overline H_+|$ and consequently,
\begin{align}\label{computation2}
    \begin{split}
        &\left(\sqrt{1+\sinh^2(\vartheta)}\,H_- - \sinh(\vartheta)\, a - H_+\right)\\
            &\qquad\qquad\qquad\ge \Big|\sqrt{1+\sinh^2(\vartheta)}\,a - \sinh(\vartheta)\, H_- - \xi\Big|.
    \end{split}
\end{align}
In the case $\underline H_-<a$, where $\xi> \overline H_+$ and both $\underline H_-^{2} - a^2$ and $\overline H_+ \underline{H}_--a\xi$ are negative, we can similarly estimate
\begin{align*}
    &-(\underline H_-^{2} - a^2) 
    \left(\sqrt{1+\sinh^2(\vartheta)}\,H_- - \sinh(\vartheta)\, a - H_+\right)\\
        &\qquad\qquad= -H_- \left(\overline H_+ \underline H_- - a \xi \right) + a \left(
        a\overline H_+ - \xi\underline H_-\right)+ H_+(\underline H_-^{2}-a^2)\\
        &\qquad\qquad\ge (H_- -\underline H_-)\left(-\overline H_+ \underline H_- + a \xi \right)\\
        &\qquad\qquad\ge (H_- -\underline H_-)\left|a \overline H_+ - \xi\underline H_- \right|,
\end{align*}
where the last inequality follows as in \eqref{computation1}.
Moreover,
\begin{align*}
    &-(\underline H_-^{2}-a^2)\left(\sqrt{1+\sinh^2(\vartheta)}\,a - \sinh(\vartheta)\, H_- - \xi\right)\\
        &\qquad\qquad= -a \left(\overline H_+ \underline H_- - a \xi\right) + H_- \left(a\overline H_+ - \xi\underline H_-\right)+ \xi(\underline H_-^{2}-a^2)\\
        &\qquad\qquad=(H_--\underline H_-)\left(a\overline H_+-\xi\underline H_-\right)
\end{align*}
Consequently, \eqref{computation2} holds in this case as well.

\medskip

Finally, we have in case $a=\underline H_-$. In this case $\xi=\overline H_+$ and therefore
\begin{align*}
    F_\vartheta \vec H_- - \vec H_+ = \begin{pmatrix}
        \sqrt{1+\sinh^2(\vartheta)}\,H_- - \sinh(\vartheta)\, \underline H_- - H_+\\[0.4em]
        \sqrt{1+\sinh^2(\vartheta)}\,\underline H_- - \sinh(\vartheta)\, H_- 
        - \overline H_+
    \end{pmatrix}.
\end{align*}
Note that $\sinh(\vartheta)=\tfrac12(\underline H_-\overline H_+^{-1}-\overline H_+\underline H_-^{-1})$ and 
\[\cosh(\vartheta)=\sqrt{1+\sinh^2(\vartheta)} = \tfrac{1}{2}(\underline H_-\overline H_+^{-1}+\overline H_+\underline H_-^{-1}).\]
Using $\overline H_+\ge H_+$, we obtain
\begin{align*}
    &\sqrt{1+\sinh^2(\vartheta)}\,H_- - \sinh(\vartheta)\, \underline H_- - H_+\\
        &\qquad\qquad\ge\frac12\left( H_- - \underline{H}_-\right) \left(\underline H_-\overline H_+^{-1}+ \overline H_+ \underline H_-^{-1}\right) = \left( H_- - \underline{H}_-\right) \cosh(\vartheta),\\
    &\sqrt{1+\sinh^2(\vartheta)}\,\underline H_- - \sinh(\vartheta)\, H_- - \overline H_+\\
        &\qquad\qquad=\frac12\left( H_- - \underline{H}_-\right) \left(\overline H_+ \underline H_-^{-1} - \underline H_-\overline H_+^{-1}\right) = -\left( H_- - \underline{H}_-\right) \sinh(\vartheta).
\end{align*}
As $\left( H_- - \underline{H}_-\right)\ge0$ and $\cosh(\vartheta)\ge |\sin(\vartheta)|$, we again obtain \eqref{computation2}, which finishes the proof.
\end{proof}

\begin{proof}[Proof of \cref{thm main}]
    Put $d_0=c_0= 0$, and for $1 \leq \ell \leq N$, put 
    \begin{align}\label{eq:def-of-c-and-d}
        d_\ell :=\sum_{i=1}^\ell(\overline H_{i,+}^2-\underline H_{i,-}^2), \quad c_{\ell} :=  \sqrt{ \max \{d_\ell, 0\}}. 
    \end{align}
    Without loss of generality, we can assume that $d_{\ell} > 0$ for some $\ell$. 
    Otherwise, \cref{thm main} is implied by \cite{Miao2002cornersATP}.

    \medskip

    Let $\Lambda' \in \{\Lambda , \ldots, N\}$ maximal with $d_{\Lambda'} > 0$.
    By assumption, we have $\Lambda' < N$.
    Furthermore, for $1 \leq \ell \leq \Lambda'$, we have $c_{\ell} = \sqrt{d_\ell}$, and for $\Lambda' +1 \leq \ell \leq N$, we have $c_\ell = 0$.
    For $0 \leq \ell \leq N$, define a symmetric $(0,2)$-tensor $k$ on $ M_{\ell}$ by setting 
    \begin{align*}
        k_\ell=\frac{c_\ell}{n-1} \cdot g_{\ell}.
    \end{align*}
    In particular, we have $k_N = 0$. 
    Moreover, since each $g_i$ has non-negative scalar curvature, the dominant energy condition holds on $M_0,\dots, M_N$.

    \medskip

    Let $\ell \in \{ 1, \ldots, \Lambda' \}$.
    If we choose 
    \begin{align}\label{eq:choice-for-lemma}
        \Sigma\coloneqq\Sigma_\ell,\quad H_\mp := H_{\ell,\mp},\quad \underline H_-\coloneqq \underline H_{\ell,-},\quad \overline H_+\coloneqq \overline H_{\ell,+}\quad\text{and}\quad a:=c_{\ell-1}
    \end{align}
    then $c_\ell = \sqrt{d_\ell} = \sqrt{\overline H_+^2-\underline H_-^2+a^2}$, and we obtain from \cref{lemma main} a hyperbolic angle $\vartheta_\ell$ such that
    \begin{align*}
        X=F_{\vartheta_\ell}\begin{pmatrix}
            H_{\ell,-}\\ c_{\ell-1}
        \end{pmatrix}
        -\begin{pmatrix}
            H_{\ell,+}\\ c_\ell
        \end{pmatrix}\in C^\infty\left(\Sigma_\ell,\mathbb{R}^{1,1}\right)
    \end{align*}
    satisfies $X_1\ge|X_2|$.

    \medskip

    For $\ell \in \{ \Lambda' +1, \ldots, N \}$, we choose $\Sigma$, $H_-$, $H_+$, $\underline H_-$, and the constant $a$ as in \eqref{eq:choice-for-lemma}, and we choose 
    \[\overline H_+ := \sqrt{ \underline H_{\ell,-}^2-c^2_{\ell-1}}.\]
    Note, that by the assumption on $\Lambda'$, we have $\underline H_{\ell,-}^2-c^2_{\ell-1}\ge \overline H_{\ell,+}^2$ and hence $\overline H_+ \geq \overline H_{\ell,+}\ge H_+$ and $\overline H_+^2 - \underline H_-^2 + a^2 = 0$.
    Therefore, an application of \cref{lemma main} gives a $\vartheta\in\bbR$ such that
    \begin{align*}
        X=F_{\vartheta_\ell}\begin{pmatrix}
            H_{\ell,-}\\ c_{\ell-1}
        \end{pmatrix}
        -\begin{pmatrix}
            H_{\ell,+} \\ 0 
        \end{pmatrix}\in C^\infty\left(\Sigma_\ell,\mathbb{R}^{1,1}\right)
    \end{align*}
    again satisfies $X_1\ge|X_2|$.

    \medskip

    Therefore, \cite[(1.7)]{KazarasKhuriLin2025} is satisfied along $\Sigma_\ell$, $\ell=1,\dots, N$,\footnote{In the notation of \cite{KazarasKhuriLin2025}, we have $\nu_+ = \binom{1}{0}$, $\tau_+ = \binom{0}{1}$ and $\beta_\pm=k(\nu,\cdot)|_{TM}=0$, $\beta^\Delta=0$. So, in our setting, \cite[(1.7)]{KazarasKhuriLin2025} transforms to the condition that $X=F_{\vartheta_\ell}\vec H_{\ell,-} -\vec H_{\ell,+}$ satisfies $X_1\ge|X_2|$.} that is the dominant energy condition holds weakly across each $\Sigma_\ell$.
    Consequently, the result follows from \cite[Theorem 1.3]{KazarasKhuriLin2025}, thus finishing the proof of \cref{thm main}.
\end{proof}

\section{Concluding remarks} \label{sec extensions}
We expect that several extensions and generalizations of the lock-principle are possible. In this section, we list a few of those without going into details.
\medskip
\begin{enumerate}[itemsep = .5em]
    \item Instead of the final jump, the unfavorable $k$-terms could also be absorbed at infinity by an asymptotically hyperboloidal end. 
    This mechanism plays a central role in the resolution of Gromov’s total mean curvature conjecture in \cite[Section 3] {FrenckHankeHirsch2026}.

    \item The assumption that the mean curvatures $ H_{i,\pm}$ are positive is not necessary. Allowing variable signs leads to a slightly stronger assumption in \eqref{eq:condition-on-squares-of-mean-curvatures} but preserves the underlying principle; see again \cite{FrenckHankeHirsch2026}.
    In the same spirit, the condition on the maximum and minimum of $H_{i,\pm}$ can be replaced by a pointwise condition. 

    \item Assumption \eqref{eq:condition-on-squares-of-mean-curvatures} could be weakened by incorporating the distance between corners, as in recent work on the positive mass theorem with shields and arbitrary ends \cite{LesourdUngerYau2024, CecchiniZeidler2024}.
    In other words, the farther away the jumps are from each other, the smaller the good jump has to be to compensate the bad jump.

    \item By an appropriate choice of $k$, any hypersurface $\Sigma_i$ can be transformed into a marginally outer trapped surface or a trapped surface which is an admissible boundary conditions for the spacetime positive mass theorem. 
    This opens the possibility of combining the lock principle with Penrose-type inequalities.
\end{enumerate}

\printbibliography
\vspace{1em}

\end{document}